\newcommand{\reg}{\text{\rm reg}}
\newcommand{\ann}{\text{\rm ann}}
\newcommand{\Min}{\text{\rm Min}}
\renewcommand{\ge}{\geqslant}
\renewcommand{\le}{\leqslant}
\newtheorem{theorem}{Theorem}[section]
\newtheorem{lemma}[theorem]{Lemma}
\newtheorem{corollary}[theorem]{Corollary}
\theoremstyle{definition}
\newtheorem{definition}[theorem]{Definition}
\newtheorem{example}[theorem]{Example}
\newtheorem{hypothesis}[theorem]{Hypothesis}
\newtheorem{discussion}[theorem]{Discussion}
\theoremstyle{remark}
\newtheorem{remark}[theorem]{Remark}
\numberwithin{equation}{section}
\title[Asymptotic linear bounds of Castelnuovo-Mumford regularity]{Asymptotic linear bounds of Castelnuovo-Mumford regularity in multigraded modules}
\author{Dipankar Ghosh}
\address{Department of Mathematics, Indian Institute of Technology Bombay, Powai, Mumbai 400076, India}
\email{dipankar@math.iitb.ac.in}
\subjclass[2010]{Primary 13E05, 13D45, 13A02}
\keywords{Graded rings and modules; Rees rings and modules; local cohomology; Castelnuovo-Mumford regularity.}
\begin{document}

\begin{abstract}
 Let $A$ be a Noetherian standard $\mathbb{N}$-graded algebra over an Artinian local ring $A_0$. Let
 $I_1,\ldots,I_t$ be homogeneous ideals of $A$ and $M$ a finitely generated $\mathbb{N}$-graded $A$-module. We prove
 that there exist two integers $k, k'$ such that
  \[\reg(I_1^{n_1}\cdots I_t^{n_t} M) \le (n_1 + \cdots + n_t) k + k'
    \quad\mbox{for all }~n_1,\ldots,n_t \in \mathbb{N}.
  \]
\end{abstract}

\maketitle

\section{Introduction}\label{Introduction}
 Let $I$ be a homogeneous ideal of a polynomial ring $S = K[X_1,\ldots,X_d]$ over a field $K$ with usual grading.
 Bertram, Ein and Lazarsfeld \cite{Be91} have initiated the study of the Castelnuovo-Mumford regularity of $I^n$
 as a function of $n$ by proving that if $I$ is the defining ideal of a smooth complex projective variety, then
 $\reg(I^n)$ is bounded by a linear function of $n$.

 Thereafter, Chandler \cite{Ch97} and Geramita, Gimigliano and Pitteloud \cite{Ge95} proved that if
 $\dim(S/I) \le 1$, then $\reg(I^n) \le n\cdot \reg(I)$ for all $n \ge 1$. This result does not hold true for
 higher dimension, due to an example of Sturmfels \cite{St00}. However, in \cite[Theorem~3.6]{Sw97}, Swanson proved
 that $\reg(I^n) \le k n$ for all $n \ge 1$, where $k$ is some integer.
 
 Later, Cutkosky, Herzog and Trung \cite{Cu99}, and Kodiyalam \cite{Ko00} independently proved that $\reg(I^n)$
 can be expressed as a linear function of $n$ for all sufficiently large $n$. Recently, Trung and Wang proved the
 above result in a more general way \cite[Theorem~3.2]{Tr05}; if $S$ is a standard graded ring over a commutative
 Noetherian ring with unity, $I$ a homogeneous ideal of $S$ and $M$ a finitely generated graded $S$-module, then
 $\reg(I^n M)$ is asymptotically a linear function of $n$. 
 
 In this context, the natural question arises ``what happens when we consider several ideals instead of just
 considering one ideal?". More precisely, if $I_1,\ldots,I_t$ are homogeneous ideals of $S$ and $M$ is a finitely
 generated graded $S$-module, then what will be the behaviour of $\reg(I_1^{n_1}\cdots I_t^{n_t} M)$ as a function
 of $(n_1,\ldots,n_t)$?
 
 Let $A = A_0[x_1,\ldots,x_d]$ be a Noetherian standard $\mathbb{N}$-graded algebra over an Artinian local ring
 $(A_0,\mathfrak{m})$. In particular, $A$ can be a coordinate ring of any projective variety over any field with
 usual grading. Let $I_1,\ldots,I_t$ be homogeneous ideals of $A$ and $M$ a finitely generated $\mathbb{N}$-graded
 $A$-module. In this article, we prove that there exist two integers $k, k'$ such that
 \begin{center}
  $(\dagger)$\hfill $\reg(I_1^{n_1}\cdots I_t^{n_t} M) \le (n_1 + \cdots + n_t) k + k'\quad$
  for all ~$n_1,\ldots,n_t \in \mathbb{N}$. \hfill \;
 \end{center}
 
 The rest of the paper is organized as follows. We start by introducing some notations and terminologies
 in Section~\ref{Notation}. In Section~\ref{Preliminaries}, we give some preliminaries on
 Castelnuovo-Mumford regularity and multigraded modules which we use in order to prove our main result. Finally,
 in Section~\ref{Linear bounds of regularity}, we prove $(\dagger)$ in several steps.
 
 \section{Notation}\label{Notation}
 Throughout this article, $\mathbb{N}$ denotes the set of all non-negative integers and $t$ is any fixed positive
 integer. We use small letters with underline (e.g., $\underline{n}$) to denote elements of
 $\mathbb{N}^t$, and we use subscripts mainly to denote the coordinates of such an element, e.g.,
 $\underline{n} = (n_1,n_2,\ldots,n_t)$. In particular, for each $1 \le i \le t$, $\underline{e}^i$ denotes the
 $i^{\rm th}$ standard basis element of $\mathbb{N}^t$. We denote $\underline{0}$ the element of $\mathbb{N}^t$
 with all components $0$. Throughout, we use the partial order on $\mathbb{N}^t$ defined by
 $\underline{n} \ge \underline{m}$ if and only if $n_i \ge m_i$ for all $1 \le i \le t$. Set
 $|\underline{n}| = n_1+\cdots+n_t$.
 
 If $R$ is an $\mathbb{N}^t$-graded ring and $L$ is an $\mathbb{N}^t$-graded $R$-module, then by $L_{\underline{n}}$,
 we always mean the $\underline{n}^{\rm th}$ graded component of $L$. By standard multigraded ring, we mean a
 multigraded ring which is generated in total degree one, i.e., $R$ is a standard $\mathbb{N}^t$-graded ring
 if $R = R_{\underline{0}}[R_{\underline{e}^1},\ldots,R_{\underline{e}^t}]$. All rings, graded or not, are assumed
 commutative with identity.
 
\section{Preliminaries}\label{Preliminaries}
 Let $A = A_0[x_1,\ldots,x_d]$ be a Noetherian standard $\mathbb{N}$-graded ring.
 Let $A_{+}$ be the ideal $\langle x_1,\ldots,x_d \rangle$ of $A$ generated by the elements of
 positive degree. Let $M$ be a finitely generated $\mathbb{N}$-graded $A$-module. For every integer $i \ge 0$,
 we denote the $i^{\rm th}$ local cohomology module of $M$ with respect to $A_{+}$ by $H_{A_{+}}^i(M)$.
 For every integer $i \ge 0$, we set
 \[ a_i(M) := \max\left\{ \mu : H_{A_{+}}^i(M)_{\mu} \neq 0 \right\} \]
 if $H_{A_{+}}^i(M) \neq 0$ and $a_i(M) := -\infty$ otherwise. The Castelnuovo-Mumford regularity of $M$ is defined by
 \[ \reg(M) := \max\left\{ a_i(M) + i : i \ge 0 \right\}.\]
 
 For a given short exact sequence of graded modules, by considering the corresponding long exact sequence of local
 cohomology modules, we can prove the following well-known result.  
 \begin{lemma}\label{lemma: properties of regularity}
  Let $A$ be as above. If $0 \rightarrow M_1 \rightarrow M_2 \rightarrow M_3 \rightarrow 0$ is a short exact
  sequence of finitely generated $\mathbb{N}$-graded $A$-modules, then we have the following.
  \begin{enumerate}
   \item[{\rm (i)}] $\reg(M_1) \le \max\{ \reg(M_2), \reg(M_3) + 1\}$.
   \item[{\rm (ii)}] $\reg(M_2) \le \max\{ \reg(M_1), \reg(M_3)\}$.
   \item[{\rm (iii)}] $\reg(M_3) \le \max\{ \reg(M_1) - 1, \reg(M_2)\}$.
  \end{enumerate}
 \end{lemma}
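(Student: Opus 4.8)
The plan is to apply the functor $H^{\bullet}_{A_+}(-)$ to the given short exact sequence and to read the three inequalities directly off the resulting long exact sequence, working degree by degree. Since $H^i_{A_+}(-)$ is a graded functor, the long exact sequence
\[
 \cdots \to H^{i-1}_{A_+}(M_3) \to H^i_{A_+}(M_1) \to H^i_{A_+}(M_2) \to H^i_{A_+}(M_3) \to H^{i+1}_{A_+}(M_1) \to \cdots
\]
consists of degree-preserving maps, hence stays exact in each fixed internal degree $\mu$. The elementary fact I would use repeatedly is that for a three-term exact sequence $U \to V \to W$ of graded modules, $V_\mu \neq 0$ forces $U_\mu \neq 0$ or $W_\mu \neq 0$. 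Combined with the definition $a_i(N) = \max\{\mu : H^i_{A_+}(N)_\mu \neq 0\}$, together with the convention $a_i(N) = -\infty$ when the module vanishes, this bounds the $a_i$ of a middle term by those of its two neighbours.

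First I would establish the $a$-invariant versions. Applying the observation at $H^i_{A_+}(M_2)$, exactness of $H^i_{A_+}(M_1) \to H^i_{A_+}(M_2) \to H^i_{A_+}(M_3)$ gives $a_i(M_2) \le \max\{a_i(M_1), a_i(M_3)\}$. Applying it at $H^i_{A_+}(M_1)$, exactness of $H^{i-1}_{A_+}(M_3) \to H^i_{A_+}(M_1) \to H^i_{A_+}(M_2)$ gives $a_i(M_1) \le \max\{a_{i-1}(M_3), a_i(M_2)\}$. Applying it at $H^i_{A_+}(M_3)$, exactness of $H^i_{A_+}(M_2) \to H^i_{A_+}(M_3) \to H^{i+1}_{A_+}(M_1)$ gives $a_i(M_3) \le \max\{a_i(M_2), a_{i+1}(M_1)\}$.

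Then I would translate each bound into a regularity inequality by adding $i$ and taking the maximum over $i \ge 0$, using $\reg(N) = \max_i\{a_i(N)+i\}$. For (ii), $a_i(M_2)+i \le \max\{a_i(M_1)+i,\, a_i(M_3)+i\} \le \max\{\reg(M_1), \reg(M_3)\}$, and passing to $\max_i$ yields (ii). For (i), rewriting the shifted term as $a_{i-1}(M_3)+i = (a_{i-1}(M_3)+(i-1))+1$ absorbs the index shift, so $a_i(M_1)+i \le \max\{\reg(M_3)+1,\, \reg(M_2)\}$, and $\max_i$ yields (i). For (iii), rewriting $a_{i+1}(M_1)+i = (a_{i+1}(M_1)+(i+1))-1$ gives $a_i(M_3)+i \le \max\{\reg(M_2),\, \reg(M_1)-1\}$, and $\max_i$ yields (iii).

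I expect no serious obstacle here: the only points demanding care are the bookkeeping of the cohomological index shifts introduced by the connecting homomorphisms, which is precisely what produces the $+1$ in (i) and the $-1$ in (iii), and the harmless handling of the $-\infty$ convention (and of vanishing terms such as $H^{-1}_{A_+}(M_3)$ when $i=0$), so that every maximum written above is well defined.
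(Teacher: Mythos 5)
Your proof is correct and follows exactly the route the paper indicates: the paper offers no detailed argument, only the remark that the lemma follows from the long exact sequence of local cohomology applied to the short exact sequence, which is precisely what you carry out. Your careful handling of the degree-preserving maps, the index shifts giving the $+1$ and $-1$, and the $-\infty$ convention fills in the details the paper leaves to the reader.
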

 We use the following well-known lemma to prove our main result inductively.
 \begin{lemma}\label{lemma: dimension reduction relation of regularity}
  Let $A$ be a Noetherian standard $\mathbb{N}$-graded ring and $M$ a finitely generated $\mathbb{N}$-graded
  $A$-module. Let $x$ be a homogeneous element in $A$ of positive degree $l$. Then we have the following inequality:
  \[\reg(M) \le \max\{\reg(0 :_M x), \reg(M/xM) - l + 1\}.\]
  Over polynomial rings over fields, if $x$ is such that $\dim(0 :_M x) \le 1$, then the inequality could be replaced
  by equality.
 \end{lemma}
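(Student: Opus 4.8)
The plan is to prove the inequality by a degree-by-degree analysis of local cohomology, reducing the circularity that a naive use of Lemma~\ref{lemma: properties of regularity} would create. Throughout, write $N := 0 :_M x$, $P := M/xM$, $Q := xM$, and set $r := \max\{\reg(N),\ \reg(P) - l + 1\}$. The goal is $\reg(M) \le r$, that is, $a_i(M) \le r - i$ for every $i$, equivalently $H_{A_{+}}^i(M)_\mu = 0$ whenever $\mu > r - i$. First I would record the two short exact sequences coming from multiplication by $x$: the graded map $M(-l) \xrightarrow{\;x\;} M$ has kernel $N(-l)$, image $Q$, and cokernel $P$, so it splits as
\[ 0 \to N(-l) \to M(-l) \xrightarrow{\;x\;} Q \to 0 \quad\text{and}\quad 0 \to Q \to M \to P \to 0. \]
Applying $H_{A_{+}}^{\bullet}(-)$ and reading off the $\mu$th graded pieces (using $H_{A_{+}}^i(M(-l))_\mu = H_{A_{+}}^i(M)_{\mu - l}$) gives exact strands
\[ H_{A_{+}}^i(N)_{\mu - l} \to H_{A_{+}}^i(M)_{\mu - l} \xrightarrow{\;\beta\;} H_{A_{+}}^i(Q)_\mu \quad\text{and}\quad H_{A_{+}}^{i-1}(P)_\mu \to H_{A_{+}}^i(Q)_\mu \xrightarrow{\;\gamma\;} H_{A_{+}}^i(M)_\mu, \]
and by functoriality the composite $\gamma\circ\beta \colon H_{A_{+}}^i(M)_{\mu-l} \to H_{A_{+}}^i(M)_\mu$ is exactly multiplication by $x$.

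The heart of the argument is to show this multiplication map is injective in the right range. From the first strand, $\beta$ is injective once $H_{A_{+}}^i(N)_{\mu - l} = 0$, which holds when $\mu - l > a_i(N)$; since $a_i(N) \le \reg(N) - i \le r - i$, it suffices that $\mu > r - i + l$. From the second strand, $\gamma$ is injective once $H_{A_{+}}^{i-1}(P)_\mu = 0$, which holds when $\mu > a_{i-1}(P)$; since $a_{i-1}(P) \le \reg(P) - i + 1 \le r + l - i$, it again suffices that $\mu > r - i + l$. Hence for every $\nu > r - i$ the map $x \colon H_{A_{+}}^i(M)_\nu \to H_{A_{+}}^i(M)_{\nu + l}$ is injective. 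I would then iterate: every degree $\nu + kl$ still exceeds $r - i$, so I obtain a chain of injections $H_{A_{+}}^i(M)_\nu \hookrightarrow H_{A_{+}}^i(M)_{\nu + l} \hookrightarrow \cdots$, and because $a_i(M) < \infty$ forces $H_{A_{+}}^i(M)_{\nu + kl} = 0$ for $k \gg 0$, injectivity yields $H_{A_{+}}^i(M)_\nu = 0$. Thus $a_i(M) \le r - i$ for all $i$, and maximizing over $i$ gives $\reg(M) \le r$.

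The step I expect to be the main obstacle is getting the direction right. Each long exact sequence individually only bounds $\reg(Q)$ in terms of $\reg(M)$ and vice versa, so feeding them into Lemma~\ref{lemma: properties of regularity} is circular. The essential point is to carry the degree shift by $l$ carefully and to observe that $\beta$ and $\gamma$ become injective on exactly the same range $\mu > r - i + l$, so that multiplication by $x$ is genuinely \emph{injective} (not merely bounded); combined with the eventual vanishing of $H_{A_{+}}^i(M)$ this is what breaks the circularity.

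Finally, for the equality over a polynomial ring over a field when $\dim(0 :_M x) \le 1$, I would establish the reverse inequality $\reg(M) \ge \max\{\reg(N),\ \reg(P) - l + 1\}$. Here the hypothesis $\dim(N) \le 1$ forces $H_{A_{+}}^i(N) = 0$ for $i \ge 2$, so that in the first long exact sequence $\beta$ is also \emph{surjective} for $i \ge 1$; this upgrades the injections of the first part to isomorphisms in the critical degrees and pins the top nonvanishing degree of $H_{A_{+}}^i(M)$ to those of $N$ and $P$, giving the matching lower bound. I expect this to be the most delicate part, since it is precisely where the dimension restriction (and the field hypothesis, through the good behaviour of regularity over polynomial rings) is used.
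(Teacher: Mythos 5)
The paper never proves this lemma: it is quoted as well-known (and the paper only ever uses the inequality, with the $-l+1$ dropped), so your proposal can only be measured against the standard argument in the literature. Your proof of the inequality is complete and correct, and it is exactly that standard argument: the two exact sequences $0 \to (0:_M x)(-l) \to M(-l) \xrightarrow{x} xM \to 0$ and $0 \to xM \to M \to M/xM \to 0$, the identification of $\gamma\circ\beta$ with multiplication by $x$ on $H_{A_{+}}^i(M)$, the common injectivity threshold $\mu > r-i+l$ for $\beta$ and $\gamma$, and the iteration of the injective multiplication map against the eventual vanishing of graded local cohomology in high degrees (the same finiteness that makes the paper's $a_i(M)$ well defined). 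All degree bookkeeping checks out.

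The equality sketch, however, has a genuine gap. Writing $N := 0:_M x$, $Q := xM$, $P := M/xM$: Grothendieck vanishing ($H_{A_{+}}^i(N)=0$ for $i\ge 2$) does make $\beta$ surjective for all $i \ge 1$, and that correctly yields $a_i(Q) \le a_i(M)+l$ for $i\ge 1$ and hence, via the second strand, $\reg(P) \le \reg(M)+l-1$, which is one half of the reverse inequality. But it does nothing for the other half, $\reg(N) \le \reg(M)$; surjectivity of $\beta$ gives no control on the cohomology of $N$, so "pinning the top nonvanishing degrees" is not justified as stated. To close this you need two ingredients your outline omits: (a) for submodules one has $H_{A_{+}}^0(N) = N \cap H_{A_{+}}^0(M)$ and $H_{A_{+}}^0(Q) = Q \cap H_{A_{+}}^0(M)$, so $a_0(N),\, a_0(Q) \le a_0(M)$; and (b) the strand $H_{A_{+}}^0(Q)_\mu \to H_{A_{+}}^1(N)_{\mu-l} \to H_{A_{+}}^1(M)_{\mu-l}$ of your first long exact sequence, which gives $a_1(N) \le \max\{a_0(Q)-l,\ a_1(M)\}$ and hence $a_1(N)+1 \le \reg(M)$ because $l \ge 1$. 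Combined with the vanishing of $H_{A_{+}}^i(N)$ for $i \ge 2$, this yields $\reg(N) = \max\{a_0(N),\ a_1(N)+1\} \le \reg(M)$, and then your bound on $\reg(P)$ finishes the equality. With (a) and (b) inserted, your proposal becomes a full proof of the whole statement.
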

 Now we give some preliminaries on $\mathbb{N}^t$-graded modules. We start with the following lemma.
 \begin{lemma}\label{lemma: Artin-Rees}
 Let $R = \bigoplus_{\underline{n}\in\mathbb{N}^t}R_{\underline{n}}$ be a Noetherian $\mathbb{N}^t$-graded ring, and let
 $L = \bigoplus_{\underline{n} \in \mathbb{N}^t}L_{\underline{n}}$ be a finitely generated $\mathbb{N}^t$-graded $R$-module.
 Set $A=R_{\underline{0}}$. Let $J$ be an ideal of $A$. Then there exists a positive integer $k$ such that
 \[J^m L_{\underline{n}}\cap H_J^0(L_{\underline{n}}) = 0 \quad\mbox{for all }~ \underline{n}\in\mathbb{N}^t
 \mbox{ and }~ m\ge k.\]
\end{lemma}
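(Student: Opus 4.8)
The plan is to reduce the desired uniform (in $\underline{n}$) statement to a single application of the graded Artin--Rees lemma over the big ring $R$, rather than treating each component $L_{\underline{n}}$ separately. The crucial observation is that since $J \subseteq A = R_{\underline{0}}$, the extended ideal $JR$ is homogeneous, so the torsion submodule $H_{JR}^0(L)$ is a graded $R$-submodule of $L$. First I would identify its graded pieces: because $(JR)^s = J^s R$ and $J$ consists of degree-$\underline{0}$ elements, a homogeneous element $x \in L_{\underline{n}}$ is annihilated by a power of $JR$ if and only if it is annihilated by a power of $J$; hence $\big(H_{JR}^0(L)\big)_{\underline{n}} = H_J^0(L_{\underline{n}})$ for every $\underline{n} \in \mathbb{N}^t$.

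Next I would record two finiteness facts that hold uniformly. Since $R$ is Noetherian and $L$ is finitely generated, the submodule $H_{JR}^0(L)$ is finitely generated; being $JR$-torsion, it is annihilated by $(JR)^s = J^s R$ for some $s$. Applying the graded Artin--Rees lemma to the ideal $JR$, the module $L$ and the submodule $N = H_{JR}^0(L)$, I obtain an integer $c$ with $(JR)^m L \cap N = (JR)^{m-c}\big((JR)^c L \cap N\big)$ for all $m \ge c$. Combining, for $m \ge c+s$ the right-hand side lies in $(JR)^{m-c} N \subseteq J^{s} R \cdot N = 0$, so $(JR)^m L \cap H_{JR}^0(L) = 0$.

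Finally I would descend to graded components. A short computation shows $\big((JR)^m L\big)_{\underline{n}} = J^m L_{\underline{n}}$: each summand $J^m R_{\underline{p}} L_{\underline{q}}$ with $\underline{p}+\underline{q}=\underline{n}$ is contained in $J^m L_{\underline{n}}$, while the term with $\underline{p}=\underline{0}$ already gives $J^m R_{\underline{0}} L_{\underline{n}} = J^m L_{\underline{n}}$. Taking the $\underline{n}$-th component of the vanishing above and using the identification of the torsion pieces yields $J^m L_{\underline{n}} \cap H_J^0(L_{\underline{n}}) = 0$ for all $\underline{n} \in \mathbb{N}^t$ and all $m \ge k := c+s$, which is exactly the claim.

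The only real obstacle is the uniformity in $\underline{n}$: a naive argument would apply Artin--Rees to each $L_{\underline{n}}$ separately and produce a bound depending on $\underline{n}$, which is useless here. Passing to $R$ and $JR$ and exploiting that $R$ is Noetherian is precisely what upgrades the pointwise statement to a single $k$; the remaining work is the routine bookkeeping of graded pieces.
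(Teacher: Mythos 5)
Your proof is correct and takes essentially the same route as the paper: pass to the extended ideal $JR$, apply Artin--Rees over the Noetherian ring $R$ to $L$ and its $JR$-torsion submodule, kill the torsion by a uniform power (you via finite generation of $H_{JR}^0(L)$, the paper via stabilization of the chain $(0:_L (JR)^i)$ --- the same Noetherian argument), and then read off graded components. Your explicit verification that $\bigl((JR)^m L\bigr)_{\underline{n}} = J^m L_{\underline{n}}$ and $\bigl(H_{JR}^0(L)\bigr)_{\underline{n}} = H_J^0(L_{\underline{n}})$ just makes precise the descent step that the paper states without comment.
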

\begin{proof}
 Let $I=JR$ be the ideal of $R$ generated by $J$. Since $R$ is Noetherian and $L$ is a finitely generated $R$-module,
 then by Artin-Rees lemma, there exists a positive integer $c$ such that 
 \begin{align}
 (I^m L)\cap H_I^0(L) &=I^{m-c} \left((I^c L)\cap H_I^0(L)\right)\quad\mbox{for all }m\ge c\nonumber\\
                               &\subseteq I^{m-c} H_I^0(L) \quad\mbox{for all }m\ge c.
                               \label{lemma: Artin_Rees: equation 1}
 \end{align}
 Now consider the ascending chain of submodules of $L$:
 \[(0:_L I) \subseteq (0:_L I^2) \subseteq (0:_L I^3)\subseteq \cdots.\]
 Since $L$ is a Noetherian $R$-module, there exists some $l$ such that
 \begin{equation}\label{lemma: Artin_Rees: equation 2}
  (0:_L I^l) = (0:_L I^{l+1}) = (0:_L I^{l+2}) = \cdots = H_I^0(L).
 \end{equation}
 Set $k:= c+l$. Then from \eqref{lemma: Artin_Rees: equation 1} and \eqref{lemma: Artin_Rees: equation 2}, we have
 \[(I^m L)\cap H_I^0(L) \subseteq I^{m-c} (0 :_L I^{m-c}) = 0\quad\mbox{for all }m\ge k,\]
 which gives $(J^m L_{\underline{n}}) \cap H_J^0(L_{\underline{n}}) = 0$ for all $\underline{n} \in \mathbb{N}^t$
 and $m\ge k$.
\end{proof}
 Now we are aiming to obtain some invariant of multigraded module with the help of the following result.
 \begin{lemma}\label{lemma: annihilator stability for multigraded modules}
  Let $R$ be a Noetherian standard $\mathbb{N}^t$-graded ring and $L$ an $\mathbb{N}^t$-graded $R$-module finitely
  generated in degrees $\le \underline{u}$. Set $A=R_{\underline{0}}$. Then we have the following.
  \begin{enumerate}
   \item[\rm (i)] For any $\underline{v} \ge \underline{u}$, $\ann_A(L_{\underline{v}}) \subseteq
   \ann_A(L_{\underline{n}})$ for all $\underline{n} \ge \underline{v}$, and hence
   $\dim_A(L_{\underline{v}}) \ge \dim_A(L_{\underline{n}})$ for all $\underline{n} \ge \underline{v}$.
   \item[\rm (ii)] There exists $\underline{v} \in \mathbb{N}^t$ such that
    $\ann_A(L_{\underline{n}}) = \ann_A(L_{\underline{v}})$ for all $\underline{n} \ge \underline{v}$, and hence 
    $\dim_A(L_{\underline{n}}) = \dim_A(L_{\underline{v}})$ for all $\underline{n} \ge \underline{v}$.
  \end{enumerate}
 \end{lemma}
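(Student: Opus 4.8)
The plan is to exploit the defining feature of a standard $\mathbb{N}^t$-grading, namely that $R_{\underline{p}+\underline{q}} = R_{\underline{p}}R_{\underline{q}}$ for all $\underline{p},\underline{q}\in\mathbb{N}^t$, which holds because $R$ is generated over $A$ by its pieces in degrees $\underline{e}^1,\dots,\underline{e}^t$, so that each $R_{\underline{s}}$ equals the product $(R_{\underline{e}^1})^{s_1}\cdots(R_{\underline{e}^t})^{s_t}$. From this I would first record the structural fact that, since $L$ is generated in degrees $\le\underline{u}$, one has $L_{\underline{n}} = R_{\underline{n}-\underline{v}}\,L_{\underline{v}}$ whenever $\underline{n}\ge\underline{v}\ge\underline{u}$: writing $L_{\underline{n}}$ as a sum of terms $R_{\underline{n}-\deg m_\alpha}\,m_\alpha$ over a generating set $\{m_\alpha\}$ and factoring $R_{\underline{n}-\deg m_\alpha} = R_{\underline{n}-\underline{v}}\,R_{\underline{v}-\deg m_\alpha}$ shows $L_{\underline{n}}\subseteq R_{\underline{n}-\underline{v}}L_{\underline{v}}$, while the reverse inclusion is immediate.

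Part (i) then drops out at once. Given $a\in\ann_A(L_{\underline{v}})$ and $\underline{n}\ge\underline{v}\ge\underline{u}$, every element of $L_{\underline{n}}$ is a sum of terms $ry$ with $r\in R_{\underline{n}-\underline{v}}$ and $y\in L_{\underline{v}}$; since $A$ is central and $R$ is commutative, $a\cdot ry = r\cdot ay = 0$, so $a\in\ann_A(L_{\underline{n}})$. The inclusion $\ann_A(L_{\underline{v}})\subseteq\ann_A(L_{\underline{n}})$ makes $A/\ann_A(L_{\underline{n}})$ a quotient of $A/\ann_A(L_{\underline{v}})$, whence $\dim_A(L_{\underline{n}}) = \dim\bigl(A/\ann_A(L_{\underline{n}})\bigr)\le\dim\bigl(A/\ann_A(L_{\underline{v}})\bigr) = \dim_A(L_{\underline{v}})$.

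For part (ii) I would run a Noetherian argument adapted to the poset $\mathbb{N}^t$. The family $\{\ann_A(L_{\underline{n}}) : \underline{n}\ge\underline{u}\}$ is directed upward, since for $\underline{n},\underline{m}\ge\underline{u}$ their componentwise maximum $\underline{p}$ satisfies $\ann_A(L_{\underline{n}}),\ann_A(L_{\underline{m}})\subseteq\ann_A(L_{\underline{p}})$ by part (i). Hence the union $J := \bigcup_{\underline{n}\ge\underline{u}}\ann_A(L_{\underline{n}})$ is an ideal of $A$. As $A$ is Noetherian, $J$ is finitely generated, say $J = \langle a_1,\dots,a_s\rangle$ with $a_j\in\ann_A(L_{\underline{n}_j})$; taking $\underline{v}$ to be a common upper bound of $\underline{u}$ and all the $\underline{n}_j$, part (i) forces every $a_j\in\ann_A(L_{\underline{v}})$, so $J\subseteq\ann_A(L_{\underline{v}})\subseteq J$ and thus $\ann_A(L_{\underline{v}}) = J$. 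For any $\underline{n}\ge\underline{v}$ the chain $\ann_A(L_{\underline{v}})\subseteq\ann_A(L_{\underline{n}})\subseteq J = \ann_A(L_{\underline{v}})$ collapses, giving the stated stabilization, and the equality of dimensions follows as in part (i).

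The only genuinely delicate point is the passage from the one-parameter case, where an ascending chain of annihilators stabilizes outright, to the multigraded setting where $\mathbb{N}^t$ is merely a directed poset. The device of collecting all the annihilators into a single finitely generated ideal $J$ and reading off a uniform bound $\underline{v}$ from finitely many generators is what circumvents this, and I expect it to be the crux of the write-up; everything else is a direct consequence of the product formula $R_{\underline{p}+\underline{q}} = R_{\underline{p}}R_{\underline{q}}$.
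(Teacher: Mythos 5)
Your proof is correct, and part (i) coincides with the paper's argument: both rest on the factorization $L_{\underline{n}} = R_{\underline{e}^1}^{n_1-v_1}\cdots R_{\underline{e}^t}^{n_t-v_t}L_{\underline{v}}$ for $\underline{n}\ge\underline{v}\ge\underline{u}$, which is exactly your product formula specialized via standardness. For part (ii), however, you take a genuinely different route. The paper invokes Noetherianness through the maximal condition on ideals: the family $\mathcal{C} = \{\ann_A(L_{\underline{n}}) : \underline{n}\ge\underline{u}\}$ has a maximal element $\ann_A(L_{\underline{v}})$, and then part (i) together with maximality instantly forces $\ann_A(L_{\underline{n}}) = \ann_A(L_{\underline{v}})$ for all $\underline{n}\ge\underline{v}$; directedness of the family is never needed. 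You instead invoke Noetherianness through finite generation: you check the family is directed (again via part (i)), form the union $J$, choose finitely many generators, and take $\underline{v}$ dominating the finitely many degrees where those generators occur. Both arguments are sound. Yours is a bit longer but identifies the stable annihilator intrinsically as $\bigcup_{\underline{n}\ge\underline{u}}\ann_A(L_{\underline{n}})$, which could be convenient if one wanted to say more about what the limiting annihilator is; the paper's maximal-element argument is the more economical way to convert the monotonicity of part (i) into stabilization. Worth noting: the issue you flag as the crux --- that $\mathbb{N}^t$ is only a directed poset, not a chain, so ascending-chain reasoning does not apply verbatim --- is dissolved rather than circumvented by the maximal-element formulation, since maximality of a member of a family of ideals makes no reference to the family being totally ordered.
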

 \begin{proof}
  (i) Let $\underline{v} \ge \underline{u}$. Since $R$ is standard and $L$ is an $\mathbb{N}^t$-graded $R$-module
  finitely generated in degrees $\le \underline{u}$, for any $\underline{n} \ge \underline{v}$ ($\ge \underline{u}$), we have
  \[ L_{\underline{n}} = R_{\underline{e}^1}^{n_1 - v_1} R_{\underline{e}^2}^{n_2 - v_2} \cdots
     R_{\underline{e}^t}^{n_t - v_t} L_{\underline{v}},\]
  which gives $\ann_A(L_{\underline{v}}) \subseteq \ann_A(L_{\underline{n}})$, and hence
  $\dim_A(L_{\underline{v}}) \ge \dim_A(L_{\underline{n}})$ for all $\underline{n} \ge \underline{v}$.
  
  (ii) Consider $\mathcal{C} := \{\ann_A(L_{\underline{n}}) : \underline{n} \ge \underline{u}\}$, a collection of
  ideals of $A$. Since $A$ is Noetherian, $\mathcal{C}$ has a maximal element $\ann_A(L_{\underline{v}})$, say.
  Then by part (i), it follows that $\ann_A(L_{\underline{n}}) = \ann_A(L_{\underline{v}})$ for all
  $\underline{n} \ge \underline{v}$, and hence $\dim_A(L_{\underline{n}}) = \dim_A(L_{\underline{v}})$
  for all $\underline{n} \ge \underline{v}$.
 \end{proof}
 Let us introduce the following invariant of multigraded module on which we apply induction to prove our main result.
 \begin{definition}\label{definition: saturated dimension}
  Let $R$ be a Noetherian standard $\mathbb{N}^t$-graded ring and $L$ a finitely generated $\mathbb{N}^t$-graded
  $R$-module. We call $\underline{v} \in \mathbb{N}^t$ {\it an annihilator stable point of $L$} if 
  \[ \ann_{R_{\underline{0}}}(L_{\underline{n}}) = \ann_{R_{\underline{0}}}(L_{\underline{v}})\quad
     \mbox{for all }~\underline{n} \ge \underline{v}.\]
  In this case, we call $s := \dim_{R_{\underline{0}}}(L_{\underline{v}})$ as {\it the saturated dimension of $L$}.
 \end{definition}
 \begin{remark}\label{remark: saturated dimension existence}
  Existence of an annihilator stable point of $L$ (with the hypothesis given in the
  Definition~\ref{definition: saturated dimension}) follows from
  Lemma~\ref{lemma: annihilator stability for multigraded modules}(ii). Let $\underline{v}, \underline{w} \in
  \mathbb{N}^t$ be two annihilator stable points of $L$, i.e., 
  \begin{align*}
   &\ann_{R_{\underline{0}}}(L_{\underline{n}}) = \ann_{R_{\underline{0}}}(L_{\underline{v}})\quad
     \mbox{for all }~\underline{n} \ge \underline{v} \\
   \mbox{and}\quad &\ann_{R_{\underline{0}}}(L_{\underline{n}}) = \ann_{R_{\underline{0}}}(L_{\underline{w}})
   \quad \mbox{for all }~\underline{n} \ge \underline{w}.
  \end{align*}
  If we denote $\dim_{R_{\underline{0}}}(L_{\underline{v}})$ and $\dim_{R_{\underline{0}}}(L_{\underline{w}})$ by
  $s(\underline{v})$ and $s(\underline{w})$ respectively, then observe that $s(\underline{v}) = s(\underline{w})$. Thus
  the saturated dimension of $L$ is well-defined.
 \end{remark}
 Let us recall the following result from \cite[Lemma~3.3]{We04}.
 \begin{lemma}\label{lemma: fixing one component of gradings}
  Let $R$ be a Noetherian standard $\mathbb{N}^t$-graded ring and $L$ a finitely generated $\mathbb{N}^t$-graded
  $R$-module. For any fixed integers $1 \le i \le t$ and $\lambda \in \mathbb{N}$, set
  \[ S_i := \bigoplus_{\{\underline{n}\in\mathbb{N}^t : n_i = 0\}} R_{\underline{n}}\quad\quad\mbox{ and }\quad
     M_{i\lambda} := \bigoplus_{\{\underline{n}\in\mathbb{N}^t : n_i = \lambda \}} L_{\underline{n}}.
  \]
  Then $S_i$ is a Noetherian standard $\mathbb{N}^{t-1}$-graded ring and $M_{i\lambda}$ is a finitely
  generated $\mathbb{N}^{t-1}$-graded $S_i$-module.
 \end{lemma}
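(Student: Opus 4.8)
The plan is to treat the single coordinate $i$ as a coarse $\mathbb{N}$-grading on $R$ and $L$, thereby reducing the multigraded statement to a standard one-parameter fact. First I would verify that $S_i$ is a standard $\mathbb{N}^{t-1}$-graded ring. Reindexing the degrees $\underline{n}$ with $n_i = 0$ by their remaining $t-1$ coordinates makes $S_i$ an $\mathbb{N}^{t-1}$-graded ring with $(S_i)_{\underline{0}} = R_{\underline{0}}$. Because $R$ is standard, every $R_{\underline{n}}$ is the $R_{\underline{0}}$-span of products of $n_j$ elements drawn from $R_{\underline{e}^j}$ for each $j$; if $n_i = 0$ then no factor from $R_{\underline{e}^i}$ can occur, so $S_i = R_{\underline{0}}[R_{\underline{e}^j} : j \neq i]$ is generated in total degree one, i.e. standard. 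Since $R$ is Noetherian, $R_{\underline{0}}$ is Noetherian and each $R_{\underline{e}^j}$ is a finitely generated $R_{\underline{0}}$-module; hence $S_i$ is a finitely generated $R_{\underline{0}}$-algebra, and is Noetherian by the Hilbert basis theorem.

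Next I would check that $M_{i\lambda}$ is an $\mathbb{N}^{t-1}$-graded $S_i$-module. If $\xi \in (S_i)_{\underline{m}}$, so that its $i$-th degree is $0$, and $x \in L_{\underline{n}}$ with $n_i = \lambda$, then $\xi x \in L_{\underline{n}+\underline{m}}$ still has $i$-th degree $\lambda$; thus $M_{i\lambda}$ is closed under the $S_i$-action, and grading it by the remaining $t-1$ coordinates makes it an $\mathbb{N}^{t-1}$-graded $S_i$-module.

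The substantive point, and the step I expect to be the main obstacle, is finite generation. Here I would regard $R$ as an $\mathbb{N}$-graded ring via the single coordinate $n_i$, writing $R = \bigoplus_{m \ge 0} R^{(m)}$ with $R^{(m)} = \bigoplus_{\{\underline{n} : n_i = m\}} R_{\underline{n}}$, so that $R^{(0)} = S_i$ and $L = \bigoplus_{m \ge 0} M_{im}$ is a finitely generated $\mathbb{N}$-graded $R$-module having $M_{i\lambda}$ as its degree-$\lambda$ component. Since $R$ is Noetherian it is a finitely generated $R^{(0)}$-algebra generated by finitely many homogeneous elements of positive degree, so each $R^{(m)}$ is a finitely generated $R^{(0)}$-module; writing $L = \sum_j R \cdot g_j$ for finitely many homogeneous $g_j$ of $i$-degrees $d_j$ gives $M_{i\lambda} = \sum_j R^{(\lambda - d_j)} g_j$, a finite sum of finitely generated $R^{(0)} = S_i$-modules. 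Thus $M_{i\lambda}$ is finitely generated over $S_i$, and since it is a graded module over the Noetherian graded ring $S_i$ we may take its finitely many generators to be homogeneous, completing the proof. The only care needed is to confirm that passing from the $\mathbb{N}^t$-grading to this coarser $\mathbb{N}$-grading preserves Noetherianity and finite generation, which it does because the underlying ring and module are unchanged.
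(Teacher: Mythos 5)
Your proof is correct. Note that the paper itself gives no argument for this lemma at all: it is simply recalled from West \cite[Lemma~3.3]{We04}, so there is no in-paper proof to compare against. Your argument is the natural self-contained one: the coarse $\mathbb{N}$-grading of $R$ and $L$ by the single coordinate $n_i$ reduces everything to standard facts about Noetherian $\mathbb{N}$-graded rings (the degree-zero piece $R^{(0)} = S_i$ is Noetherian, $R$ is a finitely generated $R^{(0)}$-algebra on positive-degree homogeneous elements, hence each component $R^{(m)}$ is a finitely generated $R^{(0)}$-module), after which writing $L = \sum_j R\, g_j$ with coarse-homogeneous generators immediately gives $M_{i\lambda} = \sum_j R^{(\lambda - d_j)} g_j$, a finitely generated $S_i$-module. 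The one point worth stating explicitly, which you use implicitly, is that a finite generating set of $L$ can indeed be replaced by the finitely many coarse-homogeneous components of its elements; this is immediate, and with it every step of your argument goes through.
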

 \begin{discussion}\label{discussion}
  Let
  \[ 
    R = \bigoplus_{(\underline{n},i)\in \mathbb{N}^{t+1}} R_{(\underline{n},i)} \quad\quad\mbox{and}\quad
    L = \bigoplus_{(\underline{n},i)\in \mathbb{N}^{t+1}} L_{(\underline{n},i)}
  \]
  be a Noetherian $\mathbb{N}^{t+1}$-graded ring and a finitely generated $\mathbb{N}^{t+1}$-graded
  $R$-module respectively. For each $\underline{n} \in \mathbb{N}^t$, we set
  \[ 
    R_{(\underline{n},\star)} := \bigoplus_{i \in \mathbb{N}} R_{(\underline{n},i)} \quad\quad\mbox{and}\quad
    L_{(\underline{n},\star)} := \bigoplus_{i \in \mathbb{N}} L_{(\underline{n},i)}.
  \]
  We give $\mathbb{N}^t$-grading structures on
  \[ R = \bigoplus_{\underline{n} \in \mathbb{N}^t} R_{(\underline{n},\star)} \quad\quad\mbox{and}\quad
     L = \bigoplus_{\underline{n} \in \mathbb{N}^t} L_{(\underline{n},\star)}
   \]
  in the obvious way, i.e., by setting $R_{(\underline{n},\star)}$ and $L_{(\underline{n},\star)}$ as the
  $\underline{n}^{\rm th}$ graded components of $R$ and $L$ respectively. Then clearly, for any
  $\underline{m}, \underline{n} \in \mathbb{N}^t$, we have
  \[
   R_{(\underline{m},\star)} \cdot R_{(\underline{n},\star)} \subseteq R_{(\underline{m} + \underline{n},\star)}
   \quad\mbox{and}\quad 
   R_{(\underline{m},\star)} \cdot L_{(\underline{n},\star)} \subseteq L_{(\underline{m} + \underline{n},\star)}.
  \]
  Thus $R$ is an $\mathbb{N}^t$-graded ring and $L$ is an $\mathbb{N}^t$-graded $R$-module. Since we are changing
  only the grading, $R$ is anyway Noetherian. Since $L$ is finitely generated $\mathbb{N}^{t+1}$-graded $R$-module,
  it is just an observation that $L = \bigoplus_{\underline{n} \in \mathbb{N}^t} L_{(\underline{n},\star)}$ is
  finitely generated as $\mathbb{N}^t$-graded $R$-module. Now we set $A := R_{(\underline{0},\star)}$. Note that
  $A$ is a Noetherian $\mathbb{N}$-graded ring, and for each $\underline{n} \in \mathbb{N}^t$,
  $R_{(\underline{n},\star)}$ and $L_{(\underline{n},\star)}$ are finitely generated $\mathbb{N}$-graded
  $A$-modules.
 \end{discussion}
  We are going to refer the following hypothesis repeatedly in the rest of the paper.
 \begin{hypothesis}\label{hypothesis}
  Let
  \[ R = \bigoplus_{(\underline{n},i)\in \mathbb{N}^{t+1}} R_{(\underline{n},i)} \]
  be a Noetherian $\mathbb{N}^{t+1}$-graded ring, {\it which need not be standard}. Let
  \[ L = \bigoplus_{(\underline{n},i)\in \mathbb{N}^{t+1}} L_{(\underline{n},i)} \]
  be a finitely generated $\mathbb{N}^{t+1}$-graded $R$-module. For each $\underline{n} \in \mathbb{N}^t$, we set
  \[ 
     R_{(\underline{n},\star)} := \bigoplus_{i \in \mathbb{N}} R_{(\underline{n},i)} \quad\quad\mbox{and}\quad
     L_{(\underline{n},\star)} := \bigoplus_{i \in \mathbb{N}} L_{(\underline{n},i)}.
  \]
  Also set $A := R_{(\underline{0},\star)}$. Suppose
  $R = \bigoplus_{\underline{n} \in \mathbb{N}^t} R_{(\underline{n},\star)}$ and $A = R_{(\underline{0},\star)}$
  are standard as $\mathbb{N}^t$-graded ring and $\mathbb{N}$-graded ring respectively, i.e.
  \[
    R = R_{(\underline{0},\star)} [R_{(\underline{e}^1,\star)}, R_{(\underline{e}^2,\star)}, \ldots,
                                   R_{(\underline{e}^t,\star)}]
    \quad\mbox{and}\quad R_{(\underline{0},\star)} = R_{(\underline{0},0)}[R_{(\underline{0},1)}].
  \]
  Assume $A_0 = R_{(\underline{0},0)}$ is Artinian local with the
  maximal ideal $\mathfrak{m}$. Since $A$ is a Noetherian standard $\mathbb{N}$-graded ring, we assume
  $A = A_0[x_1,\ldots,x_d]$ for some $x_1,\ldots,x_d \in A_1$. Let $A_{+} = \langle x_1,\ldots,x_d \rangle$.
 \end{hypothesis} 
  With the Hypothesis~\ref{hypothesis}, from Discussion~\ref{discussion}, we have the following.
  \begin{enumerate}
   \item[(0)] $R = \bigoplus_{(\underline{n},i)\in \mathbb{N}^{t+1}} R_{(\underline{n},i)}$ is not necessarily
   standard as $\mathbb{N}^{t+1}$-graded ring.
   \item[(1)] $R = \bigoplus_{\underline{n} \in \mathbb{N}^t} R_{(\underline{n},\star)}$ is a Noetherian standard
   $\mathbb{N}^t$-graded ring.
   \item[(2)] $A = R_{(\underline{0},\star)}$ is a Noetherian standard $\mathbb{N}$-graded ring.
   \item[(3)] $L = \bigoplus_{\underline{n} \in \mathbb{N}^t} L_{(\underline{n},\star)}$ is a finitely generated
   $\mathbb{N}^t$-graded $R$-module.
   \item[(4)] For each $\underline{n} \in \mathbb{N}^t$, $R_{(\underline{n},\star)}$ and
   $L_{(\underline{n},\star)}$ are finitely generated $\mathbb{N}$-graded $A$-modules.
  \end{enumerate}

  Here is an example which satisfies the Hypothesis~\ref{hypothesis}.
  \begin{example}\label{example satisfying the hypothesis}
   Let $A$ be a Noetherian standard $\mathbb{N}$-graded algebra over an Artinian local ring $A_0$. Let
  $I_1,\ldots,I_t$ be homogeneous ideals of $A$ and $M$ a finitely generated $\mathbb{N}$-graded $A$-module.
  Let $R = A[I_1 T_1,\ldots, I_t T_t]$ be the Rees algebra of $I_1,\ldots,I_t$ over the graded ring $A$ and let
  $L = M[I_1 T_1,\ldots, I_t T_t]$ be the Rees module of $M$ with respect to the ideals $I_1,\ldots,I_t$.
  We give $\mathbb{N}^{t+1}$-grading structures on $R$ and $L$ by setting $(\underline{n},i)^{\rm th}$ graded
  components of $R$ and $L$ as the $i^{\rm th}$ graded components of the $\mathbb{N}$-graded $A$-modules
  $I_1^{n_1}\cdots I_t^{n_t} A$ and $I_1^{n_1}\cdots I_t^{n_t} M$ respectively. Then clearly, $R$ is
  a Noetherian $\mathbb{N}^{t+1}$-graded ring and $L$ is a finitely generated $\mathbb{N}^{t+1}$-graded $R$-module.
  Note that $R$ is not necessarily standard as $\mathbb{N}^{t+1}$-graded ring. Also note that for each
  $\underline{n} \in \mathbb{N}^t$,
  \begin{align*}
   R_{(\underline{n},\star)}
   &= \bigoplus_{i \in \mathbb{N}} R_{(\underline{n},i)} = I_1^{n_1}\cdots I_t^{n_t} A \\
   \mbox{and }\quad L_{(\underline{n},\star)}
   &= \bigoplus_{i \in \mathbb{N}} L_{(\underline{n},i)} = I_1^{n_1}\cdots I_t^{n_t} M.
  \end{align*}
  Since $R = A[I_1 T_1,\ldots, I_t T_t] =
  R_{(\underline{0},\star)}[R_{(\underline{e}^1,\star)},\ldots,R_{(\underline{e}^t,\star)}]$,
  $R = \bigoplus_{\underline{n}\in \mathbb{N}^t}R_{(\underline{n},\star)}$ is standard as $\mathbb{N}^t$-graded ring.
  Thus $R$ and $L$ are satisfying the Hypothesis~\ref{hypothesis}.
  \end{example}
  From now onwards, by $R$ and $L$, we mean $\mathbb{N}^t$-graded ring
  $\bigoplus_{\underline{n} \in \mathbb{N}^t} R_{(\underline{n},\star)}$ and $\mathbb{N}^t$-graded $R$-module
  $\bigoplus_{\underline{n} \in \mathbb{N}^t} L_{(\underline{n},\star)}$
  (satisfying the Hypothesis~\ref{hypothesis}) respectively.
  
 \section{Linear bounds of regularity}\label{Linear bounds of regularity}
 In this section, we are aiming to prove that the regularity of $L_{(\underline{n},\star)}$ as an $\mathbb{N}$-graded
 $A$-module is bounded by a linear function of $\underline{n}$ by using induction on the saturated dimension of the
 $\mathbb{N}^t$-graded $R$-module $L$. Here is the base case.
 \begin{theorem}\label{theorem: bounds of regularity for saturated dimension 0}
  With the {\rm Hypothesis~\ref{hypothesis}}, let $L = \bigoplus_{\underline{n} \in \mathbb{N}^t} L_{(\underline{n},\star)}$
  be generated in degrees $\le \underline{u}$. If $\dim_A(L_{(\underline{v},\star)}) = 0$ for some
  $\underline{v} \ge \underline{u}$, then there exists an integer $k$ such that
  \[ \reg(L_{(\underline{n},\star)}) < |\underline{n} - \underline{u}| k + k \quad
     \mbox{for all }~\underline{n} \ge \underline{v}.\]
 \end{theorem}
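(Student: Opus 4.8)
The plan is to exploit the hypothesis $\dim_A(L_{(\underline{v},\star)}) = 0$ to control the regularity uniformly. Since $\underline{v} \ge \underline{u}$, Lemma~\ref{lemma: annihilator stability for multigraded modules}(i) gives $\ann_A(L_{(\underline{v},\star)}) \subseteq \ann_A(L_{(\underline{n},\star)})$ for all $\underline{n} \ge \underline{v}$, hence $\dim_A(L_{(\underline{n},\star)}) = 0$ for all such $\underline{n}$. For a finitely generated $\mathbb{N}$-graded $A$-module $N$ of dimension $0$, the only possibly nonzero local cohomology is $H_{A_+}^0(N)$, so $\reg(N) = a_0(N) = \max\{\mu : N_\mu \neq 0\}$; the regularity is simply the top nonvanishing degree. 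Thus I first reduce the problem to producing a linear bound on the \emph{top nonzero degree} of $L_{(\underline{n},\star)}$ as an $\mathbb{N}$-graded $A$-module.

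\smallskip
\noindent\textbf{The key estimate via standardness.} Because $R = \bigoplus_{\underline{n}} R_{(\underline{n},\star)}$ is standard as an $\mathbb{N}^t$-graded ring (property (1)) and $L$ is generated in degrees $\le \underline{u}$, for any $\underline{n} \ge \underline{v}\,(\ge \underline{u})$ I can write
\[
  L_{(\underline{n},\star)} = R_{(\underline{e}^1,\star)}^{\,n_1 - v_1}\cdots R_{(\underline{e}^t,\star)}^{\,n_t - v_t}\, L_{(\underline{v},\star)},
\]
exactly as in the proof of Lemma~\ref{lemma: annihilator stability for multigraded modules}(i). Each generator $R_{(\underline{e}^i,\star)}$ is a finitely generated $\mathbb{N}$-graded $A$-module, so there is a uniform bound, say $b$, on the top $A$-degree of a single such factor relative to its generating degrees. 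The idea is that multiplying by one factor $R_{(\underline{e}^i,\star)}$ raises the top nonzero $\mathbb{N}$-degree by at most a fixed constant $b$. Iterating this $|\underline{n} - \underline{v}|$ times, the top degree of $L_{(\underline{n},\star)}$ is bounded by the top degree of $L_{(\underline{v},\star)}$ plus $b\,|\underline{n}-\underline{v}|$, which is linear in $\underline{n}$. Choosing $k$ large enough to absorb $b$, the constant $\reg(L_{(\underline{v},\star)})$, and the shift between $\underline{v}$ and $\underline{u}$ yields the claimed bound $\reg(L_{(\underline{n},\star)}) < |\underline{n}-\underline{u}|k + k$.

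\smallskip
\noindent\textbf{Main obstacle.} The delicate point is making the per-step degree increment $b$ genuinely uniform across all $\underline{n}$ and all $i$. What I need is a single integer $b$ such that for every finitely generated $\mathbb{N}$-graded $A$-module $N$ of dimension $0$ and every $i$, the top nonzero $\mathbb{N}$-degree of $R_{(\underline{e}^i,\star)} \cdot N$ exceeds that of $N$ by at most $b$. This requires care because $R_{(\underline{e}^i,\star)}$ is not a cyclic $A$-module in general, and the products $R_{(\underline{e}^i,\star)}^{\,n_i - v_i}$ could a priori have generating degrees growing with $n_i$. I would control this by noting that $R$ is a \emph{finitely generated} $\mathbb{N}^{t+1}$-graded algebra, so the generators of $R$ over $A$ lie in finitely many bidegrees $(\underline{e}^i, j)$ with $j$ bounded by some fixed $b$; consequently each application of a degree-$\underline{e}^i$ multiplication contributes at most $b$ to the $\mathbb{N}$-grading. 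Fixing $b$ as the maximum such $j$ over all generators gives the required uniform increment, and the linear bound follows by the iteration above.
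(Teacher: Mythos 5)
Your proposal is correct, but it reaches the bound by a genuinely different and more elementary mechanism than the paper. Both arguments start from the same two ingredients: the decomposition $L_{(\underline{n},\star)} = R_{(\underline{e}^1,\star)}^{n_1-u_1}\cdots R_{(\underline{e}^t,\star)}^{n_t-u_t}L_{(\underline{u},\star)}$ coming from standardness, and a uniform bound (your $b$, the paper's $k_1$) on the $A$-generating degrees of the finitely many modules $R_{(\underline{e}^i,\star)}$ and $L_{(\underline{u},\star)}$. They diverge in how high-degree elements are killed. The paper never tracks top degrees directly: it invokes the uniform multigraded Artin--Rees statement (Lemma~\ref{lemma: Artin-Rees}) to produce a single exponent $k_2$ with $(A_+)^{k_2}L_{(\underline{n},\star)}\cap H_{A_+}^0(L_{(\underline{n},\star)})=0$ for all $\underline{n}$, and then argues by pigeonhole that any homogeneous element of degree at least $|\underline{n}-\underline{u}|k+k$ (with $k=k_1+k_2$) has some factor of degree at least $k$, hence lies in $(A_+)^{k_2}L_{(\underline{n},\star)}$ and therefore vanishes inside $H_{A_+}^0$; note this bounds $a_0(L_{(\underline{n},\star)})$ with no dimension hypothesis at all, the assumption $\dim=0$ entering only to identify $\reg$ with $a_0$. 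Your route instead leans entirely on finite length: $\dim_A L_{(\underline{v},\star)}=0$ gives a finite top degree $d_0$, and each multiplication step raises the top degree by at most $b$, so $\reg(L_{(\underline{n},\star)})\le d_0+b\,|\underline{n}-\underline{v}|$, avoiding Lemma~\ref{lemma: Artin-Rees} altogether. This works, but the one point you must make explicit is why the per-step increment really is $b$: writing $R_{(\underline{e}^i,\star)}=\sum_j A\rho_{ij}$ with $\deg\rho_{ij}\le b$, one has $R_{(\underline{e}^i,\star)}\cdot N=\sum_j \rho_{ij}(AN)=\sum_j \rho_{ij}N$, so a homogeneous element of degree $\mu>b+d$ (where $d$ is the top degree of $N$) is a sum $\sum_j\rho_{ij}y_j$ with each $y_j$ homogeneous of degree $\mu-\deg\rho_{ij}>d$, hence zero. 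Your phrasing that ``each application of a degree-$\underline{e}^i$ multiplication contributes at most $b$ to the $\mathbb{N}$-grading'' is literally false for individual ring elements, whose $\mathbb{N}$-degrees in $R_{(\underline{e}^i,\star)}$ are unbounded; it is the displayed module identity, valid precisely because $AN=N$, that rescues the claim, and it needs the finite top degree of $N$ to be available at every step (which it is, inductively, starting from $L_{(\underline{v},\star)}$). With that detail filled in, the trade-off between the two proofs is clear: the paper's argument is heavier machinery but bounds $a_0(L_{(\underline{n},\star)})$ linearly for arbitrary modules satisfying the hypothesis, a robustness consonant with its role alongside the inductive step, while yours is shorter and self-contained but intrinsically confined to the dimension-zero situation --- which is all this base-case theorem requires.
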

 \begin{proof}
  Let $\dim_A(L_{(\underline{v},\star)}) = 0$ for some $\underline{v} \ge \underline{u}$. Then from
  Lemma~\ref{lemma: annihilator stability for multigraded modules}(i), we have
  \[ \dim_A(L_{(\underline{n},\star)}) = 0 \quad\mbox{ for all }~\underline{n} \ge \underline{v}.\]
  By Grothendieck vanishing theorem, we have
  \[ H_{A_{+}}^i(L_{(\underline{n},\star)}) = 0 \quad\mbox{ for all }~i > 0\mbox{ and }~\underline{n} \ge \underline{v}.\]
  Therefore in this case
  \begin{equation}\label{theorem: bounds of regularity for saturated dimension 0: equation 1}
   \reg(L_{(\underline{n},\star)}) = \max\left\{\mu : H_{A_{+}}^0(L_{(\underline{n},\star)})_{\mu} \neq 0\right\}
   \quad\mbox{ for all }~\underline{n} \ge \underline{v}.
  \end{equation}
  
  Now consider the finite collection
  \[
    \mathcal{D} := \{R_{(\underline{e}^1,\star)},R_{(\underline{e}^2,\star)},\ldots,R_{(\underline{e}^t,\star)},
    L_{(\underline{u},\star)}\}.
  \]
  Since each member of $\mathcal{D}$ is finitely generated $\mathbb{N}$-graded $A=A_0[x_1,\ldots,x_d]$-module, we may assume
  that every member of $\mathcal{D}$ is generated in degrees $\le k_1$ for some $k_1 \in \mathbb{N}$.
  Since $L$ is a finitely generated $\mathbb{N}^t$-graded $R$-module and $A_{+}$ is an ideal of
  $A$ ($= R_{(\underline{0},\star)}$), by Lemma~\ref{lemma: Artin-Rees},
  there exists a positive integer $k_2$ such that
  \begin{equation}\label{theorem: bounds of regularity for saturated dimension 0: equation 2}
   (A_{+})^{k_2} L_{(\underline{n},\star)} \cap H_{A_{+}}^0(L_{(\underline{n},\star)}) = 0 \quad
   \mbox{ for all }~\underline{n} \in \mathbb{N}^t.
  \end{equation}
  Now set $k := k_1 + k_2$. We claim that
  \begin{equation}\label{theorem: bounds of regularity for saturated dimension 0: equation 3}
   H_{A_{+}}^0(L_{(\underline{n},\star)})_{\mu} = 0\quad\mbox{for all }~\underline{n} \ge \underline{v}
   ~\mbox{ and }~\mu \ge |\underline{n} - \underline{u}| k + k.
  \end{equation}
  
  To show \eqref{theorem: bounds of regularity for saturated dimension 0: equation 3},
  fix $\underline{n} \ge \underline{v}$ and $\mu \ge |\underline{n} - \underline{u}| k + k$. Assume
  $X \in H_{A_{+}}^0(L_{(\underline{n},\star)})_{\mu}$. Note that
  the homogeneous (with respect to $\mathbb{N}$-grading over $A$) element $X$ of
  \[
  L_{(\underline{n},\star)} =
  R_{(\underline{e}^1,\star)}^{n_1 - u_1} R_{(\underline{e}^2,\star)}^{n_2 - u_2} \cdots
  R_{(\underline{e}^t,\star)}^{n_t - u_t} L_{(\underline{u},\star)}
  \]
  can be written as a finite sum of elements of the following type:
  \[(r_{11}r_{12}\cdots r_{1~n_1-u_1}) (r_{21}r_{22}\cdots r_{2~n_2-u_2})\cdots (r_{t1}r_{t2}\cdots r_{t~n_t-u_t}) Y\]
  for some homogeneous (with respect to $\mathbb{N}$-grading over $A$) elements
  \[
  r_{i1}, r_{i2},\ldots, r_{i~n_i - u_i} \in R_{(\underline{e}^i,\star)}\quad
  \mbox{for all }~ 1 \le i \le t,\mbox{ and }~Y \in L_{(\underline{u},\star)}.
  \]
  Considering the homogeneous degree with respect to $\mathbb{N}$-grading over $A$, we have
  \[ \deg(Y) + \sum_{i=1}^t\left\{\deg(r_{i1})+\deg(r_{i2})+\cdots+\deg(r_{i~n_i-u_i})\right\} = \mu
     \ge |\underline{n} - \underline{u}| k + k,
  \]
  which gives at least one of the elements
  \[r_{11},r_{12},\ldots,r_{1~n_1-u_1},\ldots,r_{t1},r_{t2},\ldots,r_{t~n_t-u_t}\quad\mbox{and }~Y\]
  is of degree $\ge k$. In first case, we consider $\deg(r_{ij}) \ge k$ for some $i,j$. Since
  $R_{(\underline{e}^i,\star)}$ is an $\mathbb{N}$-graded $A$-module generated in degrees $\le k_1$, we have
  \begin{align*}
   r_{ij} \in \left(R_{(\underline{e}^i,\star)}\right)_{\deg(r_{ij})}
   &= (A_1)^{\deg(r_{ij}) - k_1} \left(R_{(\underline{e}^i,\star)}\right)_{k_1}\\
   &\subseteq (A_{+})^{k_2} R_{(\underline{e}^i,\star)}\quad \mbox{[as $\deg(r_{ij})-k_1 \ge k-k_1 = k_2$]}.
  \end{align*}
  In another case, we consider $\deg(Y) \ge k$. In this case also, since $L_{(\underline{u},\star)}$ is an
  $\mathbb{N}$-graded $A$-module generated in degrees $\le k_1$, we have
  \begin{align*}
   Y \in \left(L_{(\underline{u},\star)}\right)_{\deg(Y)}
   &= (A_1)^{\deg(Y) - k_1} \left(L_{(\underline{u},\star)}\right)_{k_1}\\
   &\subseteq (A_{+})^{k_2} L_{(\underline{u},\star)}\quad \mbox{[as $\deg(Y)-k_1 \ge k-k_1 = k_2$]}.
  \end{align*}
  In both cases, the typical element
  $(r_{11}r_{12}\cdots r_{1~n_1-u_1}) \cdots (r_{t1}r_{t2}\cdots r_{t~n_t-u_t}) Y$ is in
  \[ 
   (A_{+})^{k_2} R_{(\underline{e}^1,\star)}^{n_1-u_1} R_{(\underline{e}^2,\star)}^{n_2-u_2} \cdots
   R_{(\underline{e}^t,\star)}^{n_t-u_t} L_{(\underline{u},\star)} = (A_{+})^{k_2} L_{(\underline{n},\star)},
  \]
  and hence $X \in (A_{+})^{k_2} L_{(\underline{n},\star)}$. Therefore
  $X \in (A_{+})^{k_2} L_{(\underline{n},\star)} \cap H_{A_{+}}^0(L_{(\underline{n},\star)})$,
  which gives $X = 0$ by \eqref{theorem: bounds of regularity for saturated dimension 0: equation 2}. Thus we have
  \[
   H_{A_{+}}^0(L_{(\underline{n},\star)})_{\mu} = 0\quad\mbox{for all }~\underline{n} \ge \underline{v}
   ~\mbox{ and }~\mu \ge |\underline{n} - \underline{u}| k + k,
  \]
  and hence the theorem follows from \eqref{theorem: bounds of regularity for saturated dimension 0: equation 1}.
 \end{proof}
 Now we give the inductive step to prove the following linear boundedness result.
 \begin{theorem}\label{theorem: bounds of regularity for saturated dimension positive}
  With the {\rm Hypothesis~\ref{hypothesis}}, there exist $\underline{u} \in \mathbb{N}^t$ and an integer $k$ such that
  \[ \reg(L_{(\underline{n},\star)}) < |\underline{n}| k + k \quad\mbox{for all }~\underline{n} \ge \underline{u}.\]
  In particular, if $t = 1$, then there exist two integers $k, k'$ such that
  \[ \reg(L_{(n,\star)}) \le n k + k' \quad\mbox{for all }~n \in \mathbb{N}.\]
 \end{theorem}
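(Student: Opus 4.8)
The plan is to prove Theorem~\ref{theorem: bounds of regularity for saturated dimension positive} by induction on the saturated dimension $s$ of the $\mathbb{N}^t$-graded $R$-module $L$ (Definition~\ref{definition: saturated dimension}), with Theorem~\ref{theorem: bounds of regularity for saturated dimension 0} serving as the base case $s = 0$. So suppose $s \ge 1$ and fix an annihilator stable point; after shifting, we may assume $L$ is generated in degrees $\le \underline{u}$ and that $\dim_A(L_{(\underline{n},\star)}) = s$ for all $\underline{n} \ge \underline{u}$. The strategy is the standard dimension-reduction via a generic hyperplane, carried out uniformly in $\underline{n}$.

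First I would produce a single homogeneous element $x \in A_1$ of degree $1$ that simultaneously cuts the dimension of all the relevant graded pieces. The point is that the associated primes of $L_{(\underline{n},\star)}$ over $A$ stabilize for $\underline{n} \ge \underline{u}$ (by Lemma~\ref{lemma: annihilator stability for multigraded modules}, the annihilators, hence the supports, are constant), so by prime avoidance over the Artinian-local base $A_0$ one can choose $x \in A_1$ avoiding all the minimal primes of $\operatorname{Supp}_A(L_{(\underline{v},\star)})$ that correspond to dimension $s$. Multiplication by $x$ then gives, for each $\underline{n} \ge \underline{u}$, a short exact sequence
\[
0 \longrightarrow \left(0 :_{L_{(\underline{n},\star)}} x\right) \longrightarrow L_{(\underline{n},\star)} \stackrel{\cdot x}{\longrightarrow} L_{(\underline{n},\star)}(1) \longrightarrow \left(L_{(\underline{n},\star)}/x L_{(\underline{n},\star)}\right)(1) \longrightarrow 0,
\]
and Lemma~\ref{lemma: dimension reduction relation of regularity} bounds $\reg(L_{(\underline{n},\star)})$ in terms of $\reg(0 :_{L_{(\underline{n},\star)}} x)$ and $\reg(L_{(\underline{n},\star)}/x L_{(\underline{n},\star)})$.

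The key idea is then to recognize the two smaller modules as the $\underline{n}$-graded pieces of $\mathbb{N}^{t+1}$-graded $R$-modules of strictly smaller saturated dimension, so that the induction hypothesis applies. Concretely, $L/xL = \bigoplus_{\underline{n}} \bigl(L_{(\underline{n},\star)}/x L_{(\underline{n},\star)}\bigr)$ is a finitely generated $\mathbb{N}^{t+1}$-graded $R$-module still satisfying Hypothesis~\ref{hypothesis}, and by the choice of $x$ its saturated dimension is $s - 1$; similarly $(0 :_L x)$ is finitely generated (as $R$ is Noetherian), satisfies the hypothesis, and has saturated dimension $\le s - 1$. Applying the induction hypothesis to each produces vectors $\underline{u}', \underline{u}''$ and integers $k', k''$ giving linear bounds $\reg < |\underline{n}| k' + k'$ and $\reg < |\underline{n}| k'' + k''$ for $\underline{n}$ large; feeding these into the inequality from Lemma~\ref{lemma: dimension reduction relation of regularity} (the $-l+1 = 0$ shift from $\deg x = 1$ is harmless) yields a single linear bound $\reg(L_{(\underline{n},\star)}) < |\underline{n}| k + k$ for all $\underline{n} \ge \underline{u}$ after enlarging $\underline{u}$ and $k$.

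The main obstacle I anticipate is the choice of the \emph{uniform} cutting element $x \in A_1$: one needs a single linear form that drops the dimension of every $L_{(\underline{n},\star)}$ for all large $\underline{n}$ at once, rather than a separate element for each $\underline{n}$, and one must ensure such an element exists in degree $1$ over the Artinian-local ring $A_0$. This is exactly where the annihilator-stabilization (Lemma~\ref{lemma: annihilator stability for multigraded modules}) and the standardness of $A = A_0[x_1,\ldots,x_d]$ are essential: stabilization makes the supports (hence the finitely many relevant primes to avoid) independent of $\underline{n}$, and standardness guarantees $A_1$ generates $A_+$ so that a prime-avoidance argument in degree $1$ succeeds (possibly after passing to a field extension / residue considerations, handled via $A_0$ being Artinian local). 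A secondary technical point is verifying that $0 :_L x$ and $L/xL$, with the induced $\mathbb{N}^{t+1}$-grading, genuinely satisfy Hypothesis~\ref{hypothesis} and have saturated dimension strictly below $s$; this should follow formally from the constancy of annihilators and the generic choice of $x$. The final statement for $t = 1$ is then immediate: the bound $\reg(L_{(n,\star)}) < nk + k$ holds for all $n \ge u$, and the finitely many remaining values $n < u$ are absorbed by enlarging the additive constant to some $k'$.
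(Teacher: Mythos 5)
Your proposal follows the paper's proof in all essentials: induction on the saturated dimension with Theorem~\ref{theorem: bounds of regularity for saturated dimension 0} as base case, a single cutting element $x$ chosen by prime avoidance (made uniform in $\underline{n}$ by annihilator stability, Lemma~\ref{lemma: annihilator stability for multigraded modules}), the induction hypothesis applied to $L/xL$ and $(0:_L x)$ as $\mathbb{N}^t$-graded $R$-modules of saturated dimension $\le s-1$, Lemma~\ref{lemma: dimension reduction relation of regularity} to combine the two bounds, and absorption of the finitely many values $n < u$ into the constant when $t=1$.

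The one point where you deviate is your insistence that $x$ can be taken in $A_1$, and this is the only real defect in the write-up: degree-one prime avoidance can fail when the residue field of $A_0$ is finite. For instance, with $A_0=\mathbb{F}_2$ and $A=\mathbb{F}_2[x_1,x_2]$, the three primes $(x_1)$, $(x_2)$, $(x_1+x_2)$ cover all of $A_1$, so no linear form avoids them, even though none contains $A_{+}$. Your parenthetical fix (faithfully flat extension of $A_0$ to get an infinite residue field, plus invariance of regularity and of the Hypothesis under this base change) does work, but it is a genuine extra layer of argument that you only gesture at. The paper sidesteps the issue completely: graded prime avoidance always supplies a homogeneous $x$ of \emph{some} positive degree $l$ (here one also needs the observation, which the paper proves, that $\mathfrak{n}=\mathfrak{m}\oplus A_{+}$ is the unique homogeneous prime containing $A_{+}$ and is not a minimal prime of $A/\ann_A(L_{(\underline{v},\star)})$ since $s>0$), and arbitrary $l\ge 1$ is just as good because the shift $-l+1\le 0$ in Lemma~\ref{lemma: dimension reduction relation of regularity} only helps the upper bound. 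Replacing ``$x\in A_1$'' by ``$x$ homogeneous of positive degree'' turns your proposal into the paper's proof with no loss anywhere else.
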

 \begin{proof}
  Let $\underline{v} \in \mathbb{N}^t$ be an annihilator stable point of $L$ and $s$ the saturated dimension of $L$.
  Without loss of generality, we may assume that $L$ is finitely generated as $R$-module in degrees
  $\le \underline{v}$. We prove the theorem by induction on $s$. If $s = 0$, then the theorem follows from
  Theorem~\ref{theorem: bounds of regularity for saturated dimension 0} by taking $\underline{u} := \underline{v}$.
  Therefore we may as well assume that $s > 0$ and the theorem holds true for all such finitely generated
  $\mathbb{N}^t$-graded $R$-modules with the saturated dimensions $\le s - 1$.
  
  Let $\mathfrak{n} := \mathfrak{m} \oplus A_{+}$ be the maximal homogeneous ideal of $A$. We claim that
  \[ \mathfrak{n} \notin \Min\left(A/\ann_A(L_{(\underline{v},\star)})\right). \]
  Since the collection of all minimal prime ideals of $A$ containing $\ann_A(L_{(\underline{v},\star)})$ are
  associated prime ideals of $A/\ann_A(L_{(\underline{v},\star)})$, they are homogeneous, and hence they must be
  contained in $\mathfrak{n}$. Thus if the above claim is not true, then we have
  \[ \Min\left(A/\ann_A(L_{(\underline{v},\star)})\right) = \{\mathfrak{n}\},\]
  and hence $s = \dim_A(L_{(\underline{v},\star)}) = 0$, which is a contradiction. Therefore the above claim is true,
  and hence by prime avoidance lemma and using the fact that $\mathfrak{n}$ is the only homogeneous prime ideal
  of $A$ containing $A_{+}$ (as $(A_0,\mathfrak{m})$ is Artinian local), we have
  \[ A_{+} \nsubseteq \bigcup\left\{P:P \in \Min\left(A/\ann_A(L_{(\underline{v},\star)})\right)\right\}.\]
  Then by graded version of prime avoidance lemma, we may choose a homogeneous element $x$ in $A$ of positive
  degree such that
  \[ x \notin \bigcup\left\{P:P \in \Min\left(A/\ann_A(L_{(\underline{v},\star)})\right)\right\}. \]
  Note that $\ann_A(L_{(\underline{n},\star)}) = \ann_A(L_{(\underline{v},\star)})$ for all
  $\underline{n} \ge \underline{v}$. Therefore for all $\underline{n} \ge \underline{v}$, we have
  \begin{align*}
   &\dim_A\left(L_{(\underline{n},\star)}/xL_{(\underline{n},\star)}\right),~
    \dim_A\big(0 :_{L_{(\underline{n},\star)}} x\big) \le \dim_A(L_{(\underline{n},\star)}) - 1 = s - 1 \\
   &\mbox{as }\quad \ann_A\left(L_{(\underline{n},\star)}/xL_{(\underline{n},\star)}\right),~
    \ann_A\big(0 :_{L_{(\underline{n},\star)}} x\big)\supseteq \langle \ann_A(L_{(\underline{n},\star)}), x \rangle.
  \end{align*}
  
  Now observe that $L/xL$ and $(0 :_L x)$ are finitely generated $\mathbb{N}^t$-graded $R$-modules with saturated
  dimensions $\le s - 1$. Therefore, by induction hypothesis, there exist $\underline{w}$ and $\underline{w}'$ in
  $\mathbb{N}^t$ and two integers $k_1, k_2$ such that 
  \begin{align*}
   \reg\left(L_{(\underline{n},\star)}/xL_{(\underline{n},\star)}\right)
   &< |\underline{n}| k_1 + k_1\quad\mbox{for all }\underline{n} \ge \underline{w} \\
   \mbox{and}\quad\reg\big(0 :_{L_{(\underline{n},\star)}} x\big)
   &< |\underline{n}| k_2 + k_2\quad\mbox{for all }\underline{n} \ge \underline{w}'.
  \end{align*}
  Set $k := \max\{k_1, k_2\}$ and $\underline{u} := \max\{\underline{w}, \underline{w}'\}$
  (i.e., $u_i := \max\{w_i,w'_i\}$ for all $1 \le i \le t$, and $\underline{u} := (u_1,\ldots,u_t)$). Then from
  Lemma~\ref{lemma: dimension reduction relation of regularity}, we have
  \begin{align*}
   \reg(L_{(\underline{n},\star)}) &\le
   \max\left\{\reg\left(L_{(\underline{n},\star)}/xL_{(\underline{n},\star)}\right), \reg\big(0 :_{L_{(\underline{n},\star)}} x\big)\right\}\\
   & < |\underline{n}| k + k \quad \mbox{for all }\underline{n} \ge \underline{u}.
  \end{align*}
  This completes the proof of the first part of the theorem. To prove the second part, assume $t = 1$.
  Then from the first part, there exist $u \in \mathbb{N}$ and an integer $k$ such that
  \[ \reg(L_{(n,\star)}) < n k + k \quad\mbox{for all }~n \ge u.\]
  Set $k' := \max\left\{k, \reg(L_{(n,\star)}) : 0 \le n \le u-1\right\}$. Then clearly, we have
  \[ \reg(L_{(n,\star)}) \le n k + k' \quad\mbox{for all }~n \in \mathbb{N},\]
  which completes the proof of the theorem.
 \end{proof}
 Above theorem gives the result that $\reg(L_{(\underline{n},\star)})$ has linear bound for all
 $\underline{n} \ge \underline{u}$, for some $\underline{u} \in \mathbb{N}^t$. Now we prove the result
 for all $\underline{n} \in \mathbb{N}^t$.
 \begin{theorem}\label{theorem: bounds of regularity for multigraded module}
  With the {\rm Hypothesis~\ref{hypothesis}}, there exist two integers $k, k'$ such that
  \[ \reg\left(L_{(\underline{n},\star)}\right) \le (n_1 + \cdots + n_t) k + k' \quad
  \mbox{for all }~\underline{n} \in \mathbb{N}^t.\]
 \end{theorem}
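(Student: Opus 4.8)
The plan is to argue by induction on the number $t$ of grading parameters, using Theorem~\ref{theorem: bounds of regularity for saturated dimension positive} both for the base case and for the ``large'' region, and using Lemma~\ref{lemma: fixing one component of gradings} to dispose of the finitely many remaining ``boundary'' slices. For the base case $t = 1$, the second part of Theorem~\ref{theorem: bounds of regularity for saturated dimension positive} already produces integers $k, k'$ with $\reg(L_{(n,\star)}) \le nk + k'$ for all $n \in \mathbb{N}$, which is precisely the desired conclusion. So I would assume $t \ge 2$ and that the theorem holds whenever the number of grading parameters is $t - 1$.

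First I would invoke the first part of Theorem~\ref{theorem: bounds of regularity for saturated dimension positive} to obtain $\underline{u} \in \mathbb{N}^t$ and an integer $k_0$ with
\[ \reg(L_{(\underline{n},\star)}) < |\underline{n}| k_0 + k_0 \quad \text{for all } \underline{n} \ge \underline{u}. \]
The set of $\underline{n} \in \mathbb{N}^t$ that fail $\underline{n} \ge \underline{u}$ is exactly the finite union $\bigcup_{i=1}^t \bigcup_{\lambda=0}^{u_i - 1} \{\underline{n} : n_i = \lambda\}$, so it suffices to bound $\reg(L_{(\underline{n},\star)})$ linearly in $|\underline{n}|$ on each such slice and then combine with the bound above by taking maxima.

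Next, for each fixed $i$ and each $\lambda \in \{0,\ldots,u_i - 1\}$, I would apply Lemma~\ref{lemma: fixing one component of gradings} to the standard $\mathbb{N}^t$-graded ring $R$ and the module $L$, producing $S_i := \bigoplus_{\{n_i = 0\}} R_{(\underline{n},\star)}$, a Noetherian standard $\mathbb{N}^{t-1}$-graded ring, and $M_{i\lambda} := \bigoplus_{\{n_i = \lambda\}} L_{(\underline{n},\star)}$, a finitely generated $\mathbb{N}^{t-1}$-graded $S_i$-module. Carrying along the auxiliary $\mathbb{N}$-grading (the $\star$-index), I would check that the pair $(S_i, M_{i\lambda})$ again satisfies Hypothesis~\ref{hypothesis}, now with $t$ replaced by $t - 1$: the base ring $R_{(\underline{0},\star)} = A$ and its Artinian local degree-zero piece $A_0$ are unchanged, and the required standardness is exactly what Lemma~\ref{lemma: fixing one component of gradings} delivers. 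Writing $\underline{n}'$ for the $(t-1)$-tuple obtained from $\underline{n}$ by deleting its $i$-th coordinate, the induction hypothesis then yields integers $k_{i\lambda}, k'_{i\lambda}$ with $\reg(L_{(\underline{n},\star)}) \le |\underline{n}'| k_{i\lambda} + k'_{i\lambda} = (|\underline{n}| - \lambda) k_{i\lambda} + k'_{i\lambda}$ whenever $n_i = \lambda$; after replacing each $k_{i\lambda}$ by its maximum with $0$, the right-hand side is at most $|\underline{n}| k_{i\lambda} + k'_{i\lambda}$.

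Finally, setting $k := \max\{k_0, k_{i\lambda}\}$ and $k' := \max\{k_0, k'_{i\lambda}\}$ over the finitely many pairs $(i,\lambda)$ and using $|\underline{n}| \ge 0$, I would conclude $\reg(L_{(\underline{n},\star)}) \le |\underline{n}| k + k'$ on every slice as well as on $\{\underline{n} \ge \underline{u}\}$, hence for all $\underline{n} \in \mathbb{N}^t$. I expect the main obstacle to be the verification that each slice $(S_i, M_{i\lambda})$ genuinely inherits Hypothesis~\ref{hypothesis} in its $(t-1)$-parameter form, so that the induction hypothesis legitimately applies; by contrast, the combinatorial bookkeeping of merging finitely many linear bounds into a single one is routine.
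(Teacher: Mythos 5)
Your proposal is correct and follows essentially the same route as the paper's own proof: induction on $t$ with the $t=1$ case supplied by the second part of Theorem~\ref{theorem: bounds of regularity for saturated dimension positive}, the region $\underline{n} \ge \underline{u}$ handled by its first part, and the finitely many slices $\{n_i = \lambda\}$, $\lambda < u_i$, handled via Lemma~\ref{lemma: fixing one component of gradings} and the induction hypothesis. Your bookkeeping (replacing $k_{i\lambda}$ by $\max\{k_{i\lambda},0\}$) differs only cosmetically from the paper's substitution $k''_{i\lambda} = k'_{i\lambda} - \lambda k_{i\lambda}$, and your explicit check that each slice $(S_i, M_{i\lambda})$ inherits Hypothesis~\ref{hypothesis} in its $(t-1)$-parameter form is exactly the verification the paper leaves implicit.
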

 \begin{proof}
  We prove the theorem by induction on $t$. If $t = 1$, then the theorem follows from the second part of the
  Theorem~\ref{theorem: bounds of regularity for saturated dimension positive}. Therefore we may as well assume that
  $t \ge 2$ and the theorem holds true for $t-1$.
  
  By Theorem~\ref{theorem: bounds of regularity for saturated dimension positive}, there exist
  $\underline{u} \in \mathbb{N}^t$ and an integer $k_1$ such that
  \begin{equation}\label{theorem: bounds of regularity for multigraded module: equation 1}
   \reg\left(L_{(\underline{n},\star)}\right) < |\underline{n}| k_1 + k_1 \quad\mbox{for all }~\underline{n} \ge \underline{u}.
  \end{equation}
  Now for each $1 \le i \le t$ and $0 \le \lambda < u_i$, we set
  \[
   S_i := \bigoplus_{\{\underline{n} \in \mathbb{N}^t: n_i = 0\}} R_{(\underline{n},\star)}\quad\quad
   \mbox{and}\quad M_{i\lambda} := \bigoplus_{\{\underline{n} \in \mathbb{N}^t: n_i
                                 = \lambda\}} L_{(\underline{n},\star)}.
  \]
  Then from Lemma~\ref{lemma: fixing one component of gradings}, $S_i$ is a Noetherian standard
  $\mathbb{N}^{t-1}$-graded ring and $M_{i\lambda}$ is a finitely generated $\mathbb{N}^{t-1}$-graded $S_i$-module.
  Therefore, by induction hypothesis, for each $1 \le i \le t$ and $0 \le \lambda < u_i$, there exist two integers
  $k_{i\lambda}$ and $k'_{i\lambda}$ such that
  \begin{align}\label{theorem: bounds of regularity for multigraded module: equation 2}
   \reg\left(L_{(n_1,\ldots,n_{i-1},\lambda,n_{i+1},\ldots,n_t,\star)}\right)
   &\le (n_1+\cdots+n_{i-1}+n_{i+1}+\cdots+n_t) k_{i\lambda} + k'_{i\lambda}                      \nonumber \\
   &= (n_1+\cdots+n_{i-1}+\lambda+n_{i+1}+\cdots+n_t) k_{i\lambda} + k''_{i\lambda}                          \\
   &\quad\quad\quad\quad\quad~~\mbox{ for all }n_1,\ldots,n_{i-1},n_{i+1},\ldots,n_t \in \mathbb{N},\nonumber
  \end{align}
  where $k''_{i\lambda} = k'_{i\lambda} - \lambda k_{i\lambda}$. Now set
  \begin{align*}
   k  &:= \max\left\{k_1, k_{i\lambda} : 1 \le i \le t, 0 \le \lambda < u_i\right\}\quad\mbox{and}\\
   k' &:= \max\left\{k_1, k''_{i\lambda} : 1 \le i \le t, 0 \le \lambda < u_i\right\}.
  \end{align*}
  We claim that
  \begin{equation}\label{theorem: bounds of regularity for multigraded module: equation 3}
   \reg\left(L_{(\underline{n},\star)}\right) \le |\underline{n}| k + k' \quad\mbox{for all }~\underline{n} \in \mathbb{N}^t.
  \end{equation}
  To prove \eqref{theorem: bounds of regularity for multigraded module: equation 3}, consider an arbitrary
  $\underline{n} \in \mathbb{N}^t$. If $\underline{n} \ge \underline{u}$, then
  \eqref{theorem: bounds of regularity for multigraded module: equation 3} follows from
  \eqref{theorem: bounds of regularity for multigraded module: equation 1}. Otherwise if
  $\underline{n} \ngeqslant \underline{u}$, then we have $n_i < u_i$ for at least one $i \in \{1,\ldots,t\}$,
  and hence in this case, \eqref{theorem: bounds of regularity for multigraded module: equation 3}
  holds true by \eqref{theorem: bounds of regularity for multigraded module: equation 2}.
 \end{proof}
 Now we have arrived at the main goal of this article.
 \begin{corollary}\label{corollary: bounds of regularity of ideals power times module}
  Let $A$ be a Noetherian standard $\mathbb{N}$-graded algebra over an Artinian local ring $A_0$. Let
  $I_1,\ldots,I_t$ be homogeneous ideals of $A$ and $M$ a finitely generated $\mathbb{N}$-graded $A$-module.
  Then there exist two integers $k, k'$ such that
  \[
    \reg(I_1^{n_1}\cdots I_t^{n_t} M) \le (n_1 + \cdots + n_t) k + k'
    \quad\mbox{for all }~n_1,\ldots,n_t \in \mathbb{N}.
  \]
 \end{corollary}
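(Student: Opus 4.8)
The plan is to deduce this statement directly from Theorem~\ref{theorem: bounds of regularity for multigraded module} by passing to the multi-Rees algebra and module already assembled in Example~\ref{example satisfying the hypothesis}. Concretely, I would set $R = A[I_1 T_1,\ldots,I_t T_t]$ and $L = M[I_1 T_1,\ldots,I_t T_t]$, and endow them with the $\mathbb{N}^{t+1}$-grading in which the $(\underline{n},i)^{\rm th}$ component is the $i^{\rm th}$ graded piece (with respect to the $\mathbb{N}$-grading over $A$) of $I_1^{n_1}\cdots I_t^{n_t} A$ and $I_1^{n_1}\cdots I_t^{n_t} M$ respectively.

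The first step is to check that this $R$ and $L$ satisfy Hypothesis~\ref{hypothesis}; this is precisely the content of Example~\ref{example satisfying the hypothesis}. Indeed, $R$ is a Noetherian $\mathbb{N}^{t+1}$-graded ring (not necessarily standard in the full $\mathbb{N}^{t+1}$-grading, which is permitted) and $L$ a finitely generated $\mathbb{N}^{t+1}$-graded $R$-module; collapsing the last coordinate yields $R = R_{(\underline{0},\star)}[R_{(\underline{e}^1,\star)},\ldots,R_{(\underline{e}^t,\star)}]$ standard as $\mathbb{N}^t$-graded ring, with $A = R_{(\underline{0},\star)}$ the original standard $\mathbb{N}$-graded algebra $A_0[x_1,\ldots,x_d]$ and $A_0$ Artinian local by assumption.

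The crucial identification, also recorded in the Example, is that for every $\underline{n}\in\mathbb{N}^t$ the collapsed graded component equals
\[ L_{(\underline{n},\star)} = \bigoplus_{i\in\mathbb{N}} L_{(\underline{n},i)} = I_1^{n_1}\cdots I_t^{n_t} M \]
as $\mathbb{N}$-graded $A$-modules. With this equality in hand, the final step is simply to invoke Theorem~\ref{theorem: bounds of regularity for multigraded module}, which supplies integers $k,k'$ with $\reg(L_{(\underline{n},\star)}) \le |\underline{n}| k + k'$ for all $\underline{n}\in\mathbb{N}^t$; substituting the identification gives $\reg(I_1^{n_1}\cdots I_t^{n_t} M) \le (n_1+\cdots+n_t)k+k'$, which is exactly $(\dagger)$.

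Since all the analytic effort has already been expended in Theorems~\ref{theorem: bounds of regularity for saturated dimension 0}--\ref{theorem: bounds of regularity for multigraded module}, I do not expect a genuine obstacle here: the corollary is a clean specialization. The only point demanding care is the verification that the multi-Rees construction lands inside Hypothesis~\ref{hypothesis} --- in particular that $A = R_{(\underline{0},\star)}$ is standard as $\mathbb{N}$-graded ring and that $R$ is standard as $\mathbb{N}^t$-graded ring after collapsing the auxiliary coordinate --- but this is handled verbatim in Example~\ref{example satisfying the hypothesis}, so the deduction is immediate.
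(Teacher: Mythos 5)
Your proposal is correct and follows exactly the paper's own proof: construct the multi-Rees algebra $R = A[I_1T_1,\ldots,I_tT_t]$ and Rees module $L = M[I_1T_1,\ldots,I_tT_t]$ with the $\mathbb{N}^{t+1}$-grading, verify Hypothesis~\ref{hypothesis} via Example~\ref{example satisfying the hypothesis}, identify $L_{(\underline{n},\star)} = I_1^{n_1}\cdots I_t^{n_t}M$, and apply Theorem~\ref{theorem: bounds of regularity for multigraded module}. No gaps; the deduction is the same clean specialization the paper performs.
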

 \begin{proof}
  Let $R = A[I_1 T_1,\ldots, I_t T_t]$ be the Rees algebra of $I_1,\ldots,I_t$ over the graded ring $A$ and let
  $L = M[I_1 T_1,\ldots, I_t T_t]$ be the Rees module of $M$ with respect to the ideals $I_1,\ldots,I_t$. We give
  $\mathbb{N}^{t+1}$-grading structures on $R$ and $L$ by setting $(\underline{n},i)^{\rm th}$ graded
  components of $R$ and $L$ as the $i^{\rm th}$ graded components of the $\mathbb{N}$-graded $A$-modules
  $I_1^{n_1}\cdots I_t^{n_t} A$ and $I_1^{n_1}\cdots I_t^{n_t} M$ respectively.
  From Example~\ref{example satisfying the hypothesis}, note that $R$ and $L$ are satisfying the
  Hypothesis~\ref{hypothesis}, and in this case
  \[ L_{(\underline{n},\star)} = I_1^{n_1}\cdots I_t^{n_t} M \quad\mbox{ for all }~\underline{n} \in \mathbb{N}^t.\]
  Therefore the corollary follows from Theorem~\ref{theorem: bounds of regularity for multigraded module}.
 \end{proof}

 \section*{Acknowledgements}
 I would like to express my sincere gratitude to my supervisor, Prof. Tony J. Puthenpurakal, for his generous guidance and
 valuable suggestions concerning this article. I would like to thank the referee for pertinent comments. Finally,
 I thank NBHM, DAE, Govt. of India for providing financial support for this study.

\end{document}